\newtheorem{theorem}{Theorem}
\newtheorem*{theorem*}{Main Theorem} 
\newtheorem*{theorem**}{Theorem}
\theoremstyle{definition}
\newtheorem{definition}[theorem]{Definition}
\theoremstyle{remark}
\newtheorem{remark}[theorem]{Remark} 
\newtheorem*{remark*}{Remark}
\numberwithin{equation}{section}
\numberwithin{theorem}{section} 
\begin{document}

\title[A mathematical consideration of vortex thinning  in 2D turbulence 
]
{A mathematical consideraion of vortex thinning in 2D turbulence
}

\author{Tsuyoshi Yoneda}
\address{Graduate School of Mathematical Sciences, University of Tokyo, Komaba 3-8-1 Meguro, Tokyo 153-8914, Japan} 
\email{yoneda@ms.u-tokyo.ac.jp}

\subjclass[2010]{Primary 76B47; Secondary 76F02}

\date{\today} 


\keywords{Euler equations, 2D turbulence, inverse energy cascade} 

\begin{abstract} 
In two dimensional turbulence, vortex thinning process is one of the attractive mechanism to explain inverse energy cascade in terms of vortex dynamics.
By direct numerical simulation to the two-dimensional Navier-Stokes equations with small-scale forcing and large-scale damping, Xiao-Wan-Chen-Eyink (2009) found an evidence that inverse energy cascade may proceed 
with the vortex thinning mechanism. 
The aim of this paper is to analyze the vortex-thinning mechanism mathematically (using the incompressible Euler equations), and 
give a mathematical evidence that large-scale vorticity gains energy from small-scale vorticity
due to the vortex-thinning process.
\end{abstract} 

\maketitle

\section{Introduction} \label{sec:Intro} 
``Vortex thinning"  is  one of the most important mechanism for   two-dimensional turbulence.
In \cite{XWCE},
Xiao-Wan-Chen-Eyink investigated inverse energy cascade in steady-state two-dimensional turbulence by direct numerical simulation of the two-dimensional Navier-Stokes equations with small scale forcing and  large scale damping. 
The two dimensional Navier-Stokes equations are described as follows:

%
\begin{align} \nonumber 
&\partial_tu -\nu\Delta u+ u\cdot\nabla u = - \nabla p+f, 
\qquad 
t \geq 0, \; x \in 
[-\pi,\pi)^2
\\ 
\label{E} 
&\mathrm{div}\, u = 0 
\\ 
\nonumber 
&u(0) = u_0 
\end{align} 
where $u=u(t,x)=(u_1(x,t),u_2(x,t))$, $p=p(x,t)$ and  $f=f_1(x_1,x_2),f_2(x_1,x_2))$ denote the velocity field, the pressure function of the fluid and the external force respectively. The case $\nu=0$, we call the Euler equations.
In their numerical work, they used an alternative  equations (adding a damping term),
and found strong evidence that inverse energy cascade may proceed with vortex thinning mechanism.
According to their evidence, there is a tensile turbulent stress in directions parallel to the isolines of small-scale vorticity.  
In their analysis,  the following large-scale strain tensor was  introduced:
\begin{equation*}
\bar S_\ell:=\frac{1}{2}\left[(\nabla \bar u_\ell)+(\nabla \bar u_\ell)^{T}\right],
\end{equation*}
where $\bar u_\ell$ is large-scale velocity defined by
\begin{equation*}
\bar u_\ell(x,t):=\int_{\mathbb{R}^2}G_\ell(r)u(x+r,t)dr
\end{equation*}
with $G_\ell(r)=\ell^{-2}G(r/\ell)$ and $G(r)=\sqrt{6/\pi}\exp(-6|r|^2)$. 
From $\bar S_\ell$, we can define a quantity of ``deformation work", the rate of work being done locally by the large-scale strain as it acts against the small-scale  stress. 
Thus the small-scale circular vortex will be stretched into elliptical shape.
According to Kelvin's theorem of conservation of circulation,
the magnitude of the velocity around the vortex decrease, and then its 
energy should be reduced. The energy lost by the small-scale vortex is transferred to the large scale.
In fact, the Reynolds stress created by the thinned vortex is primarily along the stretching direction,
positive. Hence, the deviatoric stress is positively aligned with the large-scale strain (this energy transfer mechanism can be justified by (3.4) and (3.11) in \cite{XWCE}). 
It does negative work, namely, negative eddy viscosity.  
For more detail of their argument, see \cite[Section 3 and Section 5]{XWCE}. 

The aim of this  paper is to analyze  such vortex-thinning mechanism   mathematically (without ``eddy viscosity"), and give a mathematical evidence that  large-scale vorticity gains  energy from small-scale vorticity due to the vortex-thinning process.
In our consideration, for the sake of simplicity, we 
 neglect the 
viscosity and the forcing terms. 
Thus
we  work with 
 the inviscid vorticity equations (equivalently, the Euler equations) in the whole space:
%
\begin{align} \label{vorticity equation} 
&\partial_t\omega + u{\cdot}\nabla \omega = 0, 
\qquad\qquad\qquad\quad 
t \geq 0, \; x \in \mathbb{R}^2 
\\  \nonumber
&\omega(0) = \omega_0\quad \text{with}\quad \int_{\mathbb{R}^2}\omega_0(x)dx=0, 
\end{align} 
where $u = \nabla^\perp \Delta^{-1} \omega$ and the vorticity $\omega$ is defined as $\omega = \partial_1 u_2 - \partial_2 u_1$.
Throughout this paper we aways handle smooth initial vorticity with compact support, thus
the corresponding  smooth solution always exists globally in time. Since the mean-value of the initial vorticity is zero, we see that the initial energy $\|u(0)\|_{L^2}$ is always finite.
Note that the first rigorous results for the existence theorem to \eqref{vorticity equation} were proved in the framework of H\"older spaces by Gyunter \cite{Gu}, Lichtenstein \cite{Li} and Wolibner \cite{Wo}. 
More refined results using a similar functional setting were obtained subsequently by 
Kato \cite{Ka,Ka2}, Swann \cite{Sw}, Bardos-Frisch \cite{BF}, Ebin \cite{Eb}, Chemin \cite{Ch}, 
Constantin \cite{Co1} and Majda-Bertozzi \cite{MB} among others. 
Here we just refer the Kato's existence theorem.
Let $H^s$ ($s\geq 0$) be  inhomogeneous Sobolev spaces.
\begin{theorem}
(\cite{Ka2})
Let $s>1$ and $\omega_0\in H^s(\mathbb{R}^2)$. Then there is a unique solution $\omega\in C([0,\infty); H^s(\mathbb{R}^2))$
of \eqref{vorticity equation}.
\end{theorem}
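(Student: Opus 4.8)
The plan is to run the classical energy method: obtain an a priori bound for the $H^s$ norm of a smooth solution, show this bound is global in time by exploiting the transport structure of \eqref{vorticity equation}, construct a solution by approximation, and prove uniqueness separately. For the a priori bound, apply $J^s:=(I-\Delta)^{s/2}$ to \eqref{vorticity equation} and pair the result with $J^s\omega$ in $L^2$. Because $\mathrm{div}\,u=0$, the contribution of $u\cdot\nabla J^s\omega$ vanishes after integration by parts, leaving only the commutator $[J^s,\,u\cdot\nabla]\omega$; estimating it by the Kato--Ponce commutator inequality — and, for $1<s\le 2$ where $\omega$ need not be Lipschitz, by a Littlewood--Paley decomposition — gives
\[
\frac{d}{dt}\|\omega(t)\|_{H^s}\le C\,\|\nabla u(t)\|_{L^\infty}\,\|\omega(t)\|_{H^s},
\]
where the Biot--Savart law $u=\nabla^\perp\Delta^{-1}\omega$ (meaningful in $L^2$ precisely because $\int_{\mathbb R^2}\omega_0=0$) is used to bound $\|u\|_{H^s}$ by $\|\omega\|_{H^{s-1}}+\|u\|_{L^2}$.

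Since $\nabla u$ is, entrywise, a Calder\'on--Zygmund transform of $\omega$, it has the \emph{same} regularity as $\omega$, so the displayed inequality already yields a local-in-time solution. To make it global I would invoke the logarithmic Sobolev inequality
\[
\|\nabla u\|_{L^\infty}\le C\big(1+\|\omega\|_{L^2}+\|\omega\|_{L^\infty}\log(e+\|\omega\|_{H^s})\big),
\]
and here two-dimensionality is decisive: because $\omega$ is transported by the divergence-free field $u$, the flow map preserves Lebesgue measure, hence $\|\omega(t)\|_{L^p}=\|\omega_0\|_{L^p}$ for every $p\in[1,\infty]$; in particular $\|\omega(t)\|_{L^\infty}$ and $\|\omega(t)\|_{L^2}$ are constant in time, and $\|u(t)\|_{L^2}$ is conserved along smooth Euler flows. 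Feeding these conserved quantities into the logarithmic inequality and setting $z(t):=\log(e+\|\omega(t)\|_{H^s})$, the differential inequality collapses to $\dot z\le C(1+z)$, whose solution $z(t)\le (z(0)+Ct)e^{Ct}$ stays finite on every bounded interval; thus $\|\omega(t)\|_{H^s}$ cannot blow up in finite time.

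To produce an honest solution I would regularize \eqref{vorticity equation} — for instance by mollifying the advecting velocity, or by adding $\varepsilon\Delta\omega$ — solve the regularized problem by a contraction argument, note that the estimates above are uniform in the regularization parameter, and pass to the limit: Aubin--Lions compactness furnishes strong convergence in $C([0,T];H^{s'}_{\mathrm{loc}})$ for $s'<s$, which suffices to identify the nonlinear term, while weak-$\ast$ continuity into $H^s$ together with the usual argument (approximating the datum by mollifications and using time reversibility) upgrades this to $\omega\in C([0,\infty);H^s)$. For uniqueness I would argue at the level of velocities: if $u_1,u_2$ are two solutions with the same datum, then $w:=u_1-u_2$ satisfies $\partial_t w+u_1\cdot\nabla w=-\,w\cdot\nabla u_2-\nabla(p_1-p_2)$, and an $L^2$ energy estimate yields $\frac{d}{dt}\|w\|_{L^2}^2\le 2\|\nabla u_2\|_{L^\infty}\|w\|_{L^2}^2$; since $\|\nabla u_2\|_{L^\infty}\in L^1_{\mathrm{loc}}$ by the a priori bounds, Gronwall forces $w\equiv 0$.

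The step I expect to be the real obstacle is the globalization, coupled with the low-regularity bookkeeping. The $H^s$ bound closes for all time only because the logarithmic Sobolev inequality is married to the conservation of the $L^p$ norms of $\omega$ coming from the transport structure — a mechanism with no counterpart in three dimensions — and when $s$ is close to $1$ the commutator must be treated by a paradifferential/Littlewood--Paley argument rather than by crude Sobolev embeddings, all while keeping careful track of the low-frequency behaviour of $u=\nabla^\perp\Delta^{-1}\omega$ on the whole plane.
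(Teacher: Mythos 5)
The paper does not actually prove this statement: it is quoted verbatim from Kato \cite{Ka2} as a known existence/uniqueness result, so there is no internal proof to compare yours against. Your sketch is the standard modern energy-method proof — commutator estimate, the Brezis--Gallouet/Beale--Kato--Majda logarithmic inequality, conservation of the $L^p$ norms of the transported vorticity to close the log-Gronwall argument, regularization plus compactness for existence, and a Yudovich-type $L^2$ estimate on the velocity difference for uniqueness — and it is essentially sound; it is also correctly calibrated in identifying the two genuinely delicate points, namely the commutator for $1<s\le 2$ (where $\nabla\omega\notin L^\infty$, so the naive Kato--Ponce form produces an uncontrollable term $\|u\|_{H^s}\|\nabla\omega\|_{L^\infty}$ and one must pass to a paraproduct version of the commutator estimate) and the low-frequency behaviour of $u=\nabla^\perp\Delta^{-1}\omega$ on the whole plane. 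On that second point one caveat is worth making explicit: your globalization and uniqueness steps use $\|u(t)\|_{L^2}$ and an $L^2$ estimate on $w=u_1-u_2$, but for a general $\omega_0\in H^s(\mathbb{R}^2)$ the Biot--Savart velocity need not lie in $L^2$ even when $\int\omega_0=0$ (one needs some decay, e.g.\ $\int|x|\,|\omega_0|<\infty$); under the paper's standing hypotheses (smooth, compactly supported, mean-zero vorticity) this is harmless, but to prove the theorem in the stated generality you should either formulate the argument with $\nabla u$ and $\|\omega\|_{L^2\cap L^\infty}$ only, or note that Kato's own treatment in \cite{Ka2} is set up precisely to handle velocities that are not square-integrable. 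Your approach buys a self-contained, quantitative (double-exponential) bound on $\|\omega(t)\|_{H^s}$, which is more than the bare statement requires.
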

We have to mention that  the relation between the vortex-thinning process and palinstrophy ($H^1$-norm of the vorticity) has already been studied. 
In 
 \cite[Section 6.3]{AP} (see also \cite{MSMOM}),
they examined 2D vorticity equations with odd (in both $x_1$ and $x_2$) type of  initial vorticities
and measured these palinstrophy.
From their argument we know  
 that  palinstrophy 
is one of the key to see  2D turbulence. 
On the other hand, 
some of mathematicians have showed that 
there is an initial vorticity in $H^1$ such that the value of  
$\|\nabla \omega(t)\|_{L^2}$ (palinstrophy) instantaneously blows up.
More precisely, 
 Bourgain-Li \cite{BL} and  Elgindi-Jeong \cite{EJ} constructed  solutions
to the 2D-Euler equations 
which exhibit norm inflation  in  $H^1$ 
(see also 
\cite{MY2}). 
We essentially use their construction of the initial vorticity.
Let us now define ``vortex thinning" mathematically. 
In order to give  it, we need to define   ``Lagrangian flow $\Phi$" as follows:
\begin{equation*}
\partial_t\Phi(t,x)=u(t,\Phi(t,x))\quad\text{with}\quad \Phi(0,x)=x\in
\mathbb{R}^2
\end{equation*}
Note that the vortex-thinning  is one of the  ``large deformation gradient" 
which is already described in \cite[Proposition 3.4]{BL}.
\begin{definition}
First we choose $M>1$ and fix it.
We call ``$\Phi$ has a vortex thinning effect  at a point $x\in\text{supp}\, \omega_0(\Phi^{-1})$ and a time $t$ with direction $T$ (unit vector)" if and only if 
\begin{equation*}
\partial_T(\Phi(x,t)\cdot T)\geq M. 
\end{equation*}
\end{definition}
We now set a large-scale vorticity $\omega^L_0$ and a small-scale vorticity $\omega^S_0$ such that 
$\omega_0^L,\omega_0^S\in C^\infty(\mathbb{R}^2)$,
$\text{supp}\,\omega_0^L\cap\text{supp}\,\omega_0^S=\emptyset$,
$|\omega_0^L|, |\omega_0^S|\lesssim1$, 
both of them are odd in $x_1$ and $x_2$
(we consider more general vorticity cases in the next section).
More precisely,
let us  give  the large-scale initial vorticity
in the polar coordinate $(r,\theta)$ 
as defined in \cite[(5)]{EJ} (see also \cite[(3.4)]{BL} and \cite{KS}):
\begin{equation}\label{initial vorticity}
\omega_0^L(r,\theta):=\chi_N(r)\psi(\theta),
\end{equation}
where $\chi$ and $\psi$ are smooth bump functions, namely,  
\begin{equation*}
\chi_N(r):=
\begin{cases}
1\quad\text{for}\quad r\in [N^{-1},N^{-1/2}]\\
0\quad\text{for}\quad r\not\in [N^{-1}/2,2N^{-1/2}]
\end{cases}
 \text{and}\  
\psi(\theta):=
\begin{cases}
1\quad\text{for}\quad \theta\in [\pi/4,\pi/3]\\
\frac{1}{2}\quad\text{for}\quad \theta\in [\pi/5,9\pi/24]\\
0\quad\text{for}\quad \theta\not\in [\pi/6,5\pi/12].
\end{cases}
\end{equation*}
We choose $\omega_0^S$ such that 
$\text{supp}\,\omega_0^S\subset B(N^{-1}):=\{x\in\mathbb{R}^2: |x|<N^{-1}\}$ 
and 
\begin{equation*}
\{(h,h): n^{-1}\leq h\leq n^{-1}(\log N)^{K\tau^*}\}\subset \text{supp}\,\omega_0^S
\end{equation*}
for some $n>N$.
The meaning of the constants $K$ and $\tau^*$ are the same as in \cite{EJ}.
In this case $H^1$-norm (palinstorophy) may be large provided by large $N$.
In \cite{BL} they figured out that this kind of  odd vorticity creates large  deformation gradient.
But they employed a contradiction argument and thus we cannot figure out what kind of deformation and where it occurs (see Proof of Proposition 3.4 in their paper).
Nevertheless, with the help of Elgindi-Jeong's argument in \cite{EJ}, we can clarify it.
The main result is as follows:

\begin{theorem}
For any $M>1$, there is $N_0>0$ and $\tau^*>0$ such that if $N>N_0$, 
then there is $t\in (0, \tau^*\log\log N/\log N]$,
at least either of the following two cases must occurs.
\begin{itemize}
\item
Small-scale vortex-thinning:
\begin{equation*}
 \partial_{e_2}(\Phi(x,t)\cdot e_2) \geq M^{1/2}
\end{equation*}
for some
$
x\in \text{supp}\,\omega_0^S
$,
where
$e_2=(0,1)$.
\item 
Large-scale vortex-thinning (but it stretches only tail part):
\begin{equation*}
 \partial_T(\Phi(x,t)\cdot T) \geq M^{1/2}
\end{equation*}
for some 
$
x\in \text{supp}\,\omega_0^L
\cap D
$
and
$T=\frac{1}{\sqrt{2}}(1,1)$,
where 
\begin{equation*}
D=B\left(\alpha\tau^*N^{-1/2}\frac{\log\log N}{\log N}\right)
\end{equation*}
with sufficiently large positive constant $\alpha>0$.
(In this case we can rephrase  $\omega_0^L\cdot \chi_{D}$ as the small-scale vorticity,
while $\omega_0^L\cdot (1-\chi_{D})$ as the large-scale vorticity parts.)
\end{itemize}

\end{theorem}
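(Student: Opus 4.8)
The plan is to run the Elgindi--Jeong hyperbolic-flow analysis of the odd vorticity $\omega_0=\omega_0^L+\omega_0^S$ while following the Lagrangian flow map $\Phi$ itself (not merely $\|\omega(t)\|_{H^1}$), and to organize the argument as a continuity (bootstrap) argument whose only two break-down modes are exactly the two alternatives in the statement. Throughout put $T^*:=\tau^*\log\log N/\log N$.

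\emph{Step 1: the hyperbolic skeleton near the origin.} Since $\omega_0^L,\omega_0^S$ are odd in both $x_1$ and $x_2$, the unique global smooth solution $\omega(t)$ stays odd in both variables; hence the coordinate axes are invariant under $\Phi(t,\cdot)$, the origin is a stagnation point, and the Biot--Savart law together with the key lemma of \cite{EJ} (cf. \cite{BL}) gives, for $x\in B(cN^{-1/2})$,
\begin{equation*}
u(t,x)=\bigl(-\lambda(t,\abs{x})\,x_1,\ \lambda(t,\abs{x})\,x_2\bigr)+R(t,x),\qquad
\lambda(t,\rho):=\tfrac{1}{\pi}\int_{\abs{y}\ge\rho}\frac{y_1y_2}{\abs{y}^4}\,\omega(t,y)\,dy,
\end{equation*}
with $\abs{R(t,x)}\lesssim\abs{x}\,\|\omega_0\|_{L^\infty}$, which is negligible next to the strain term once $\lambda\gg1$. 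Integrating $\partial_t\Phi=u(t,\Phi)$ against this and applying Gronwall's inequality yields
\begin{equation*}
\Phi(t,x)=\bigl(x_1e^{-\Lambda(t,x)},\ x_2e^{\Lambda(t,x)}\bigr)+E(t,x),\qquad
\Lambda(t,x):=\int_0^t\lambda\bigl(s,\abs{\Phi(s,x)}\bigr)\,ds,
\end{equation*}
with relative error $\abs{E(t,x)}/\abs{x}$ small as long as $\Lambda(s,x)\le\log M$ for $s\le t$ and $\mathrm{supp}\,\omega(s)$ has stayed inside a fixed dilate of $\mathrm{supp}\,\omega_0$; in that regime $\partial_{e_2}(\Phi(x,t)\cdot e_2)=e^{\Lambda(t,x)}(1+o(1))$ and $\partial_T(\Phi(x,t)\cdot T)=\tfrac12\,e^{\Lambda(t,x)}(1+o(1))$.

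\emph{Step 2: the strain has size $\log N$ and does not collapse before $T^*$.} Evaluating $\lambda(0,\rho)$ on \eqref{initial vorticity}, using $\psi\ge0$ and $y_1y_2>0$ on its angular support and that $\chi_N$ occupies the radial scales $[N^{-1},N^{-1/2}]$, gives $\lambda(0,\rho)\gtrsim\log(N^{-1/2}/\rho)$ for $\rho\le N^{-1/2}$; thus $\lambda(0,\rho)\gtrsim\log N$ for $\rho\le N^{-1}$, and for $\alpha$ large the bulk of this integral is carried by $\mathrm{supp}\,\omega_0^L\cap D$. If no large-scale thinning has occurred up to $t\le T^*$, then each parcel of $\omega_0^L\cdot\chi_D$ has been displaced in $\abs{x}$ by at most a factor bounded in $M$ --- here the expansion of Step 1, the invariance of the axes, and $\det D\Phi\equiv1$ are used --- so, by the positivity preserved in each quadrant, $\lambda(t,\abs{\Phi(t,x)})\gtrsim_M\log N$ for all $x\in\mathrm{supp}\,\omega_0^S$, whence $\Lambda(T^*,x)\gtrsim_M\log N\cdot T^*=(\mathrm{const}_M)\,\tau^*\log\log N\to\infty$ as $N\to\infty$.

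\emph{Step 3: the continuity argument, and the obstacle.} Run the bootstrap and let $t_0\in(0,T^*]$ be the first time at which the error bound or the support control of Step 1 degrades. At $t_0$, either $\Lambda(t_0,x)=\log M$ for some $x\in\mathrm{supp}\,\omega_0^S$, which gives $\partial_{e_2}(\Phi(x,t_0)\cdot e_2)\ge M^{1/2}$ (the small-scale case); or a parcel of $\mathrm{supp}\,\omega_0^L\cap D$ has left the good region, which by Step 1 forces $\partial_T(\Phi(x,t_0)\cdot T)\ge M^{1/2}$ there (the large-scale case) --- the outer piece $\omega_0^L\cdot(1-\chi_D)$ lives at radii where the ambient strain is only $O(\log\log N)$ and so cannot leave the good region before $T^*$, and $\omega_0^S$ leaving would already be a small-scale thinning. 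If instead no such $t_0<T^*$ exists, the expansion of Step 1 holds on all of $[0,T^*]$, so Step 2 gives $\Lambda(T^*,x)\to\infty$ and hence $\partial_{e_2}(\Phi(x,T^*)\cdot e_2)\ge M^{1/2}$ once $N>N_0(M,\tau^*)$; this forces $t_0\le T^*$, making the disjunction exhaustive, and explains the choice of $D$: its radius is taken large (large $\alpha$) enough that $\mathrm{supp}\,\omega_0^L\cap D$ captures the $\gtrsim\log N$ bulk of the strain, yet small enough that its parcels remain in $B(cN^{-1/2})$ up to $T^*$ under an $O(M^{1/2})$ deformation. The genuine difficulty is this bootstrap: proving that before any thinning event the Biot--Savart expansion has remainder $o(\log N)$ and, in particular, that the dominant strain-carrying part $\omega_0^L\cdot\chi_D$ is neither transported to radii $\sim N^{-1/2}$ nor sheared enough to destroy $\lambda\gtrsim_M\log N$ --- essentially a flow-map version of the $H^1$ norm-inflation bootstrap of \cite{EJ}, arranged so that its only exits are the two geometric events above.
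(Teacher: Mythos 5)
Your overall dichotomy is the right one and matches the paper's: either the large-scale vorticity keeps producing an order-$\log N$ hyperbolic strain at the origin throughout $(0,\tau^*\log\log N/\log N]$, in which case the small-scale blob on the diagonal is stretched in the $e_2$ direction (this is the paper's Case II, run through Zlato\v s's decomposition $u^i(t,x)/x_i=(-1)^iQ(t,x)+B_i(t,x)$ with $Q\ge c_M\log N$, which is exactly your $\lambda\gtrsim\log N$), or this fails and the failure itself is the large-scale thinning event. The gap is in how you close the second horn.

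You declare the large-scale alternative to be ``a parcel of $\omega_0^L\cdot\chi_D$ leaves the good region'' and assert that ``by Step 1'' this forces $\partial_T(\Phi(x,t_0)\cdot T)\ge M^{1/2}$. Displacement of a parcel is not a derivative bound: to conclude $\partial_T(\Phi\cdot T)\ge M^{1/2}$ you must exhibit two material points whose images are separated, or brought together, by a factor $M^{1/2}$ relative to their initial separation in a specific direction, and your Step 1 expansion --- which is only claimed valid \emph{before} the breakdown time --- supplies nothing of the kind at $t_0$. The paper's Case I does exactly this missing work. It quantifies the failure not as ``leaving a region'' but as the angular measure $|I(t_0,r_0)|$ of $\Phi(t_0,V)\cap\{r=r_0\}$ dropping below $M^{-1}$ for more than half of $r_0\in[N^{-5/6},N^{-4/6}]$; this pinches two points $x_1\in\partial\mathrm{cone}^+$ and $x_2\in\partial\mathrm{cone}^-$, whose initial separation along $n=\tfrac{1}{\sqrt2}(-1,1)$ is $\gtrsim N^{-m}M^{-1/2}$, into a distance $\lesssim M^{-1}N^{-m}$; the mean value theorem then gives $|\partial_n(\Phi^{-1}\cdot n)|\gtrsim M^{1/2}$, and only after that does volume preservation ($\det D\Phi=1$, via the inverse function theorem) convert contraction of $\Phi^{-1}$ along $n$ into expansion of $\Phi$ along $T=n^{\perp}$. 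You have no analogue of this contraction-to-expansion step. Relatedly, the restriction of the large-scale event to $x\in\mathrm{supp}\,\omega_0^L\cap D$ comes in the paper from the a priori velocity bound $|u|\lesssim N^{-1/2}$ (points outside $D$ cannot move far enough in time $\tau^*\log\log N/\log N$ for the angular collapse to occur there), not from your unproved claim that the ambient strain outside $D$ is only $O(\log\log N)$. You candidly label the bootstrap closure as ``the genuine difficulty''; that difficulty is precisely the content of the proof, and it remains unaddressed.
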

\begin{proof}
The idea is to use Elgindi-Jeong's argument \cite{EJ} with minor modifications.
By 
 (10) in \cite{EJ}, any line segment 
\begin{equation*}
\{(r,\theta_0): N^{-1}\leq r\leq N^{-1/2}\}
\end{equation*}
evolve in a way that it intersects each circle 
$
\{r=r_0\}
$
for
$N^{-5/6}\leq r_0\leq N^{-4/6}$.
Recall that 
\begin{equation*}
I(t,r_0):=\{0\leq \theta\leq \pi/2: (r_0,\theta)\in R(t)\}
\end{equation*}
with
\begin{equation*}
R(t):=\Phi(t,V)\cap A,
\end{equation*}
\begin{equation*}
A:=\{r:N^{-\frac{5}{6}}\leq r\leq N^{-\frac{4}{6}}\}
\quad\text{and}\quad
V:=\{(r,\theta):\omega_0^L(r,\theta)\geq 1/2\}.
\end{equation*}

We now  consider two cases as in \cite{EJ}:
\begin{itemize}
\item
The case I:
there exists a time moment $t_0$ such that for 
more than half of $r_0\in [N^{-5/6},N^{-4/6}]$ in the Haar measure, we have  $|I(t_0, r_0)|\leq M^{-1}$.
\item
The case II:
for all time $t_0\in [0,\tau^*\log\log N/\log N]$, at least half of $r_0\in [N^{-5/6},N^{-4/6}]$ in the Haar measure, we have  $|I(t_0, r_0)|\geq M^{-1}$.
\end{itemize}


\vspace{0.2cm}

Let us now first consider the case I.
In this case, diagonal direction of vortex thinning effect to $\omega^L$ itself occurs.
Let
\begin{equation*}
\partial\text{cone}^+:=
\{(r,\theta):r>0, \theta=9\pi/24\},
\quad
\partial\text{cone}^-:=
\{(r,\theta):r>0, \theta=\pi/5\}
\end{equation*}
and $\partial\text{cone}=\partial\text{cone}^+\cup\partial \text{cone}^-$.
Let $n=(1/\sqrt 2)(-1,1)$. In this case there exists a time moment $t_0$ and there are two points $x_1 \in\partial\text{cone}^+\cap \text{supp}\, \omega^L_0$ and $x_2 \in\partial\text{cone}^-\cap \text{supp}\, \omega^L_0$ such that  
\begin{equation*}
|(\Phi(x_1,t_0)-\Phi(x_2,t_0))\cdot n|\approx
|\Phi(x_1,t_0)-\Phi(x_2,t_0)|\lesssim M^{-1}N^{-m}
\end{equation*}
with
\begin{equation*}
\Phi(x_1,t_0)\cap \partial B(N^{-m})\not=\emptyset\quad\text{and}\quad \Phi(x_2,t_0)\cap \partial B(N^{-m})\not=\emptyset
\end{equation*}
for some $m\geq 9/12$ (this $9/12$ comes from ``more than half of $[N^{-5/6},N^{-4/6}]$ in the Haar measure").
If both $x_1$ and $x_2$ satisfy
\begin{equation*}
x_1,x_2\in \partial \text{cone}\cap\left(B(N^{-1/2})\setminus B(N^{-m}M^{-1/2})\right),
\end{equation*}
then we can estimate the distance between $x_1$ and $x_2$ as 
\begin{equation*}
|(x_1-x_2)\cdot n|\gtrsim N^{-m}M^{-1/2},
\end{equation*}
where $n=(1/\sqrt 2)(-1,1)$.
Thus by the mean value theorem,
 there is a point $y\in \text{supp}\, \omega_0(\Phi^{-1})$
 such that 
\begin{equation*}
|\partial_n(\Phi^{-1}(y,t_0)\cdot n)|\gtrsim M^{1/2}.
\end{equation*}
By the inverse function theorem with volume preserving, we have 
\begin{equation}\label{first inequality}
|\partial_\tau(\Phi(x,t_0)\cdot \tau)|\gtrsim M^{1/2}
\end{equation}
with $x=\Phi^{-1}(y,t_0)$.
This is the desired estimate.
If 
at most one point $x_1$ satisfies 
\begin{equation*}
x_1\in (B(N^{-1/2})\setminus B(N^{-m}M^{-1/2}))\cap \partial\text{cone}
\end{equation*}
for some $m>9/12$,
then the other point must be
\begin{equation*}
x_2\in B(N^{-m}M^{-1/2})\cap\partial\text{cone}.
\end{equation*}
In this case  we choose a third point $x_3$ to be 
\begin{equation*}
x_3\in B(N^{-1})\subset B(\epsilon N^{-m})
\end{equation*}
for sufficiently small $\epsilon>0$ (we choose $\epsilon$ so that 
$\epsilon>N^{-1+m}(\log N)^{C\tau}$).
Then by (10) in \cite{EJ}, we see
\begin{equation*}
\Phi(t_0,x_3)\in B(N^{-1}(\log N)^{C\tau})\subset B(\epsilon N^{-m})
\end{equation*}
for large $N>0$. Note that  the constants $C$ and $\tau$ have the same meaning in \cite{EJ}.
Thus we have 
\begin{equation*}
|(x_2-x_3)\cdot \tau|\lesssim N^{-m}(M^{-1/2}-\epsilon)
\end{equation*}
and 
\begin{equation*}
|(\Phi(t_0,x_2)-\Phi(t_0,x_3))\cdot \tau|\gtrsim N^{-m}(1-\epsilon).
\end{equation*}
By the mean-value theorem, we finally have 
\begin{equation}\label{second inequality}
|\partial_\tau(\Phi\cdot \tau)|\gtrsim M^{1/2}(1-\epsilon).
\end{equation}
This is  the desired estimate.

\begin{remark}
At least, $|I(t_0,r_0)|\leq M^{-1}$ never occur in
\begin{equation*}
r_0\in \left[\alpha \tau^*N^{-1/2}\frac{\log\log N}{\log N}, N^{-1/2}\right]
\end{equation*}
for sufficiently large $\alpha>0$.
In fact, since $\omega(t,x)=\omega_0(\Phi^{-1}(t,x))$, $\text{supp}\, \omega_0\subset B(N^{-1/2})$, $|\omega_0(x)|\leq 1$, we have the following a priori velocity estimate:
\begin{eqnarray*}
|u(t_0,x)|&=&|\nabla^{\perp}\Delta^{-1}\omega(t_0,x)|\leq |\nabla^{\perp}\Delta^{-1}\omega_0(\eta^{-1}(t_0,x))|\\
&\lesssim&
\sup_{\stackrel{\tilde\Phi\in (C^\infty(\mathbb{R}^2))^2}{\ \det D\tilde \Phi=1}}
\int_{
[0,\infty)^2
}\omega_0(\tilde \Phi(y))\frac{dy}{|y-x|}
\lesssim
\int_{B(N^{-1/2})}\frac{1}{|x|}dx
\lesssim N^{-1/2}.
\end{eqnarray*}
Let 
\begin{eqnarray*}
& &
x_1\in\partial\text{cone}^+\cap \left(B(N^{-1/2})\setminus B(\alpha\tau^*N^{-1/2}\log\log N/\log N)\right),\\
& &
x_2\in\partial\text{cone}^-\cap \left(B(N^{-1/2})\setminus B(\alpha\tau^*N^{-1/2}\log\log N/\log N)\right)
\end{eqnarray*}
with sufficiently large constant $\alpha>0$.
By the above a priori velocity estimate, we have 
\begin{equation}\label{a priori velocity estimate}
|\Phi(t_0,x_1)-\Phi(t_0,x_2)|\geq |x_1-x_2|-2N^{-1/2}t\gtrsim |x_1-x_2|
\end{equation}
for $t_0\in [0, \tau\log\log N/\log N)$.
Thus $|I(t_0,r_0)|\geq M^{-1}$ provided by sufficiently large $\alpha$.

\end{remark}

\vspace{0.2cm}

Next we consider the case II.
In this case $x_2$-direction of vortex thinning effect to the small-scale vortex occurs,
while
large-scale vorticity $\omega_0^L$ creates large scale strain. 
Let us choose two points $x_1$, $x_2\in \text{supp}\, \omega^S_0$ as  (the constants $K$ and $\tau^*$ have the same meaning in \cite{EJ})
\begin{equation*}
x_1=(n^{-1},n^{-1})\quad\text{and}\quad x_2=((\log N)^{K\tau^*}n^{-1},(\log N)^{K\tau^*}n^{-1})
\end{equation*}
for some $n>N$.
Now we recall  Zlatos's velocity estimate \cite{Z} (just extend it to the whole space case):
\begin{theorem}
Let $\omega(t,\cdot)$ be odd in $x_1$ and $x_2$. Then for $x\in[0,1/2)^2$, we have 
\begin{equation*}
u^i(t,x)/x_i=(-1)^iQ(t,x)+B_i(t,x)
\end{equation*}
with 
\begin{equation*}
Q(t,x)=\frac{4}{\pi}\int_{[2x_1,\infty)\times [2x_2,\infty)}\frac{y_1y_2}{|y|^2}\omega(t,y)dy
\end{equation*}
and $|B_i|\leq C\|\omega\|_\infty(1+\log(1+x_{3-i}/x_i)$ for $i=1,2$.
\end{theorem}
By the same argument as in \cite{EJ} (the constants $c_M$ and $C_M$ have the same meaning in there), we have
\begin{equation*}
Q(t, x_2)\geq c_M\log N
\end{equation*}
and the $B_i(t,x)$-term can be neglected.
Thus we have 
\begin{equation*}
\partial_t\Phi_2(t, x_2)\geq C_M\log N \Phi_2(t,x_2).
\end{equation*}
Let $e_2=(0,1)$. Then we see that 
\begin{eqnarray*}
\frac{(\Phi(x_2)-\Phi(x_1))\cdot e_2}{(x_2-x_1)\cdot e_2}
&\geq &
\frac{(\log N)^{C_M}(\log N)^{K\tau^*}-(\log N)^{C\tau}}{(\log N)^{K\tau^*}-1}\\
&=&
\frac{(\log N)^{C_M}-(\log N)^{C\tau-K\tau^*}}{1-(\log N)^{-K\tau^*}}.
\\
\end{eqnarray*}
In this case we choose 
$K$
such that $C-K<0$
and choose sufficiently large $N$ so that  $(\log N)^{C_M}\geq  M$.
By the mean-value theorem, we have 
\begin{equation}\label{third inequality}
\partial_{e_2}(\Phi\cdot e_2)\approx (\log N)^{C_M}\gtrsim M.
\end{equation}

\end{proof}


\section{General vorticity setting}\label{general setting}

In this section we  extend the vortex-thinning mechanism to general vorticity cases.
Since we need to require finite energy, mean-zero vorticity condition
$\int\omega_0=0$ should be required. Otherwise, the energy becomes infinite due to the slowly decaying velocity.  
Let $\partial_t \Phi(x,t)=u(\Phi(x,t),t)$ be a solution to the Euler equations (vorticity equations) \eqref{vorticity equation} with the initial vorticity $\text{rot}\,u_0=\omega^S_0+\omega^L_0$, and let $\partial_t\Psi(x,t)=(u+v)(\Psi(x,t),t)$ also be a solution to the Euler equations \eqref{vorticity equation} with the initial vorticity  $\text{rot}\,u_0+\text{rot}\,v_0$ (in this case $v_0$ is a perturbation, and assume $\omega^P_0:=\text{rot}\, v_0$ has  compact support).
By  \cite[Lemma 4.1]{BL}, we have 
\begin{eqnarray*}
& &
\sup_{0<t<1}
|\partial_T(\Phi\cdot T)-\partial_T(\Psi\cdot T)|
\leq 
\sup_{0<t<1}|D\Phi(t,x)-D\Psi(t,x)|\\
&\leq& \left(\sup_{0<t<1}\|v(t)\|_\infty+\sup_{0<t<1}\|\nabla v(t)\|_\infty \right)\exp\left\{\sup_{0<t<1}\|\nabla u(t)\|_\infty\right\}.
\end{eqnarray*}

By the Sobolev embedding,
\begin{equation}\label{Sobolev embedding}
\|v(t)\|_{\infty}+\|\nabla v(t)\|_\infty\leq C_1(s)\|v(t)\|_{H^s}\quad\text{for}\quad s>2,
\end{equation}
where $C_1$ is a 
  positive constant  satisfying $C_1(s)\to\infty$ as $s\to 2$.
Moreover, by the continuity on initial velocity  in $H^s$ $(s>2)$, we have 
\begin{equation*}
\sup_{0<t<1}\|v(t)\|_s\leq C_2(s,\|u_0\|_s,\|u_0+v_0\|_s)\|v_0\|_s,
\end{equation*}
where $C_2$ is a
 positive constant satisfying $C_2(s,\|u_0\|_s,\|u_0+v_0\|_s)\to \infty$ as $s\to 2$.
 Continuity of the solution map for the Euler equations in Sobolev spaces 
$H^{s}$ 
for 
 $s > 2$ is of course well known 
(see e.g., Ebin-Marsden \cite{EbMa}, Kato-Lai \cite{KL}, Kato-Ponce \cite{KP-Duke} and also \cite{MY2}).
Thus  if the perturbation $v_0$ is controlled as 
\begin{equation*}
C_1C_2\|v_0\|_s\leq C_3 M^{1/2}\quad\text{for some}\quad s>2
\end{equation*}
with some constant $C_3>0$ determined by \eqref{first inequality}, \eqref{second inequality} and \eqref{third inequality},
then we get the same vortex thinning mechanism to the initial velocity $u_0+v_0$.
This means that a distorted symmetry case (measured in $H^s$) also keeps the 
vortex-thinning process in a short time interval. 

Vorticity far from the origin (we call ``remainder part") does not strongly affect to the vortex-thinning process which is occurring near the origin.
In this case we just apply ``gluing the patches argument", Lemma 5.2  in \cite{BL}.
Let
\begin{align} \label{euler1} 
&\partial_t\tilde \omega + \tilde u{\cdot}\nabla \tilde \omega = 0, 
\qquad\qquad\qquad\quad 
t \geq 0, \; x \in \mathbb{R}^2 
\\  \nonumber
&\tilde \omega(0) =f\quad\text{with}\quad f= \omega_0^L+\omega^S_0+\omega^P_0,  \\
\nonumber
&
\partial_t\tilde \Phi=\tilde u(\tilde \Phi),
\end{align} 
where $\tilde u = \nabla^\perp \Delta^{-1} \tilde \omega$ and 
 \begin{align} \label{euler2} 
&\partial_t \omega +  u{\cdot}\nabla \omega = 0, 
\qquad\qquad\qquad\quad 
t \geq 0, \; x \in \mathbb{R}^2 
\\  \nonumber
&\omega(0) =f+g \quad\text{with}\quad g= \omega^R_0,\\
\nonumber
& 
\partial_t\Phi= u(\Phi),
\end{align} 
where $u = \nabla^\perp \Delta^{-1}\omega$. Here we   need to assume 
$\omega_0^R$ is in $L^p\cap L^1$ $(p>2)$ with $\|\omega_0^R\|_{L^p}+\|\omega_0^R\|_{L^1}\lesssim 1$
In this case we have 
\begin{eqnarray*}
|u(t,x)|&=&|\nabla^{\perp}\Delta^{-1}\omega(t,x)|\leq |\nabla^{\perp}\Delta^{-1}(f+g)(\Phi^{-1}(t,x))|\\
&\leq&
\sup_{\stackrel{\tilde\Psi\in (C^\infty(\mathbb{R}^2))^2}{\ \det D\tilde \Psi=1}}
\int_{\mathbb{R}^2}(f+g)(\tilde \Psi(y))\frac{dy}{|y-x|}
\leq \beta
\end{eqnarray*}
for some positive constant $\beta>0$.
By the above a priori velocity estimate, it is reasonable to assume 
\begin{equation*}
d(\text{supp}\, f,\text{supp}\, g)\geq \beta.
\end{equation*}
In this case, the supports of $\tilde \omega(t)$ and $\omega^R(t)$ are always disjoint in $t\in [0,1]$.
But we moreover need to assume that 
\begin{equation*}
d(\text{supp}\, f,\text{supp}\, g)\geq R_\epsilon\, (>\beta),
\end{equation*}
where $R_\epsilon$ is already defined in Lemma 5.2 in \cite{BL}.
With a minor modification of the proof of Lemma 5.2 in \cite{BL} (see also Remark \ref{adding remainder case} in this paper),
we immediately have the following:
\begin{theorem}
Let $s>1$ be fixed.
Also let $D=\{x: d(x,\text{supp}\,f)<\beta\}$ and $\omega_f:=\chi_D\omega$.
For any sufficiently small $\epsilon>0$, we have 
\begin{equation*}
\sup_{0\leq t\leq 1}\|\tilde \omega(t)-\omega_f(t)\|_{H^s}<\epsilon
\end{equation*}
provided that $R_\epsilon>0$ is sufficiently large.
\end{theorem}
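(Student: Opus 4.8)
The plan is to prove the final Theorem by reducing it to a careful application of the "gluing the patches" argument, Lemma 5.2 in Bourgain--Li \cite{BL}, applied to the splitting $f=\omega_0^L+\omega_0^S+\omega_0^P$ (carrying the vortex-thinning dynamics near the origin) and the remainder $g=\omega_0^R$. First I would record the a priori velocity bound already derived in the excerpt: since $\omega(t,x)=\omega_0(\Phi^{-1}(t,x))$, the flow is volume preserving, and $f+g\in L^1\cap L^p$ with $p>2$, the Biot--Savart kernel estimate gives $\|u(t)\|_\infty\le\beta$ uniformly on $[0,1]$, with $\beta$ depending only on $\|f+g\|_{L^1\cap L^p}$. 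Consequently, if $d(\mathrm{supp}\,f,\mathrm{supp}\,g)\ge\beta$ then on $[0,1]$ the transported supports $\mathrm{supp}\,\tilde\omega(t)$ and $\mathrm{supp}\,\omega^R(t)$ stay in disjoint regions, and in particular $\omega_f(t):=\chi_D\omega(t)$ is supported inside $D=\{x:d(x,\mathrm{supp}\,f)<\beta\}$, so $\chi_D$ acts as a genuine indicator on that support for all $t\in[0,1]$.

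Next I would set up the comparison $\tilde\omega-\omega_f$. Both solve transport equations: $\tilde\omega$ is transported by $\tilde u=\nabla^\perp\Delta^{-1}\tilde\omega$, while $\omega_f=\chi_D\omega$ is transported by $u=\nabla^\perp\Delta^{-1}\omega=\nabla^\perp\Delta^{-1}(\omega_f+\omega_g)$ (with $\omega_g=(1-\chi_D)\omega$), since $\chi_D$ is constant along the flow on the relevant region. The difference then satisfies
\begin{equation*}
\partial_t(\tilde\omega-\omega_f)+u\cdot\nabla(\tilde\omega-\omega_f)=-(\tilde u-u)\cdot\nabla\tilde\omega
=\nabla^\perp\Delta^{-1}(\omega_g-(\omega_f-\tilde\omega))\cdot\nabla\tilde\omega,
\end{equation*}
where I split $\tilde u-u=\nabla^\perp\Delta^{-1}(\tilde\omega-\omega_f)-\nabla^\perp\Delta^{-1}\omega_g$. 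The key point is that $\omega_g$ is supported at distance at least $d(\mathrm{supp}\,f,\mathrm{supp}\,g)-\beta$ from $\mathrm{supp}\,\tilde\omega(t)$, so the velocity $\nabla^\perp\Delta^{-1}\omega_g$ and all its derivatives, restricted to $\mathrm{supp}\,\tilde\omega(t)$, are controlled by inverse powers of that distance times $\|\omega_g\|_{L^1}$; choosing $R_\epsilon$ large makes this "remote interaction" term as small as we like in any $H^s$ (on the $\mathrm{supp}\,\tilde\omega$ side). Then an $H^s$ energy estimate on $\tilde\omega-\omega_f$ using the commutator/product estimates for the Euler equations in $H^s$, $s>1$ (exactly as in Kato's theory and Lemma 5.2 of \cite{BL}), together with Gr\"onwall's inequality on $[0,1]$, yields $\sup_{0\le t\le 1}\|\tilde\omega(t)-\omega_f(t)\|_{H^s}\le C\,\|(\text{remote term})\|<\epsilon$ once $R_\epsilon$ is large enough. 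The initial data agree ($\tilde\omega(0)=f=\omega_f(0)$ since $\chi_D\equiv1$ on $\mathrm{supp}\,f$), so there is no contribution from $t=0$.

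The main obstacle I anticipate is making the $H^s$ estimate of the remote-interaction term genuinely uniform: one must show that $\nabla^\perp\Delta^{-1}\omega_g$ and its derivatives are small \emph{in $H^s$ norm measured against $\mathrm{supp}\,\tilde\omega(t)$} even though $\omega_g$ itself is only in $L^1\cap L^p$ and may be large, and one must track that the support-separation degrades by at most $2\beta t\le 2\beta$ over the time interval, which is where the precise choice $R_\epsilon>\beta$ and the quantitative dependence $R_\epsilon=R_\epsilon(\epsilon)$ enter. This is precisely the content of Lemma 5.2 of \cite{BL}, so the honest work here is to check that the minor modification --- replacing a single patch by the composite $f=\omega_0^L+\omega_0^S+\omega_0^P$ and allowing $\omega_0^R\in L^1\cap L^p$ rather than compactly supported smooth --- does not disturb the estimates; neither change is essential since $f$ is still smooth with compact support and the remainder enters only through its $L^1\cap L^p$ norm via the far-field Biot--Savart bound. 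Finally I would remark (referring to Remark~\ref{adding remainder case}) that because $\|\tilde\omega(t)-\omega_f(t)\|_{H^s}<\epsilon$, the perturbation argument of Section~\ref{general setting} via \cite[Lemma 4.1]{BL} applies, so the vortex-thinning conclusions of the Main Theorem for $\tilde\Phi$ transfer to $\Phi$, which is the intended consequence of the statement.
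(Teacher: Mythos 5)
Your proposal is correct and follows essentially the same route as the paper, which itself only invokes Lemma 5.2 of \cite{BL} (``gluing the patches'') together with the energy-estimate sketch of Remark \ref{adding remainder case}: the a priori Biot--Savart bound $\|u\|_\infty\le\beta$ keeps the supports separated on $[0,1]$, the far-field decay of $\nabla^\perp\Delta^{-1}\omega_g$ on $\mathrm{supp}\,\tilde\omega(t)$ supplies the small forcing, and a Gr\"onwall argument closes the estimate. Your version carries this out at the level of $\|\tilde\omega-\omega_f\|_{H^s}$ rather than the paper's $L^2$ velocity difference $\eta=u_f-\tilde u$, which is if anything closer to what the stated theorem requires (note only the harmless sign slip in $\nabla^\perp\Delta^{-1}(\omega_g-(\omega_f-\tilde\omega))$, which should read $\nabla^\perp\Delta^{-1}(\omega_f+\omega_g-\tilde\omega)$).
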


By the above theorem with the Sobolev embedding \eqref{Sobolev embedding}, we can easily show that the initial vorticity $f+g$ also create the  vortex-thinning process.

\section{energy transfer from small scale vortex to large scale vortex}

In this section we give an evidence of  energy transfer from small-scale vortex to  large-scale vortex.
For the small-scale vortex $\omega^S$, 
we  assume a simple vortex-thinning process: $\omega^S(x,t)=\omega^S_0(Mt x_1,(Mt)^{-1} x_2)$ 
for the sake of simplicity. Let 
\begin{equation*}
u^L:=\nabla^{\perp}\Delta^{-1}\omega^L\quad\text{and}\quad u^S:=\nabla^{\perp}\Delta^{-1}\omega^S.
\end{equation*}
We can measure the energies of each scale vortices.
In this section we also assume that  the mean value of the each scale vortices are zero, thus the  energies of the each vortices are also finite.
Also assume that $\text{supp}\, \omega_0^S\cap \text{supp}\, \omega_0^L=\emptyset$.
 In this case we just directly take the $L^2$-norm to $u^S$ and $u^L$.

\begin{theorem}
Let $\omega^S+\omega^L$ be a solution to \eqref{vorticity equation}
with initial vorticity $\omega_0^S+\omega_0^L$.
For the initial vorticity $\omega_0^S+\omega_0^L$ or $-\omega_0^S+\omega_0^L$, 
then we have the following energy  estimate:
\begin{equation*}
(\|u^L_0\|_{L^2}^2+\|u^S_0\|_{L^2}^2)^{1/2}-\|u^S(t)\|_{L^2}\leq \|u^L(t)\|_{L^2}.
\end{equation*}
Moreover, for fixed $t\in (0,1]$,
\begin{equation*}
\|u^S(t)\|_{L^2}\to 0 \quad\text{as}\quad M\to \infty.
\end{equation*}
These estimates are the evidence of  the energy-transfer mechanism.
\end{theorem}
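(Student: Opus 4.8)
The plan is to extract the energy estimate from the conservation of total energy plus a positivity (orthogonality-type) argument, and then to prove the vanishing of the small-scale energy by a direct scaling computation. First I would recall that solutions of \eqref{vorticity equation} with finite-energy, mean-zero, compactly supported vorticity conserve the kinetic energy $\|u(t)\|_{L^2}$. Writing $u = u^S + u^L$ (with $u^S = \nabla^\perp\Delta^{-1}\omega^S$, $u^L = \nabla^\perp\Delta^{-1}\omega^L$), conservation gives
\begin{equation*}
\|u^S(t)\|_{L^2}^2 + 2\langle u^S(t), u^L(t)\rangle + \|u^L(t)\|_{L^2}^2 = \|u^S_0\|_{L^2}^2 + 2\langle u^S_0, u^L_0\rangle + \|u^L_0\|_{L^2}^2.
\end{equation*}
The key observation is that at time $0$ the cross term $\langle u^S_0, u^L_0\rangle$ can be made nonnegative for one of the two choices $\pm\omega^S_0$: replacing $\omega^S_0$ by $-\omega^S_0$ flips the sign of $u^S_0$ and hence of the cross term, so at least one of the two initial data satisfies $\langle u^S_0, u^L_0\rangle \ge 0$, whence the right-hand side is at least $\|u^S_0\|_{L^2}^2 + \|u^L_0\|_{L^2}^2$.

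Next I would handle the cross term at time $t$. By Cauchy–Schwarz, $2\langle u^S(t), u^L(t)\rangle \ge -2\|u^S(t)\|_{L^2}\|u^L(t)\|_{L^2}$, so
\begin{equation*}
\|u^S_0\|_{L^2}^2 + \|u^L_0\|_{L^2}^2 \le \|u^S(t)\|_{L^2}^2 - 2\|u^S(t)\|_{L^2}\|u^L(t)\|_{L^2} + \|u^L(t)\|_{L^2}^2 = \bigl(\|u^L(t)\|_{L^2} - \|u^S(t)\|_{L^2}\bigr)^2
\end{equation*}
whenever $\|u^L(t)\|_{L^2}\ge \|u^S(t)\|_{L^2}$ (which will hold once the second part of the theorem is established and $M$ is large, since $\|u^L(t)\|_{L^2}$ stays of order one). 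Taking square roots gives exactly $(\|u^L_0\|_{L^2}^2+\|u^S_0\|_{L^2}^2)^{1/2} - \|u^S(t)\|_{L^2} \le \|u^L(t)\|_{L^2}$, the claimed inequality.

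For the second assertion I would compute directly with the assumed self-similar small-scale profile $\omega^S(x,t)=\omega^S_0(Mt\,x_1, (Mt)^{-1}x_2)$. Since this is an area-preserving change of variables, $\|\omega^S(t)\|_{L^2} = \|\omega^S_0\|_{L^2}$ and $\|\omega^S(t)\|_{L^1}=\|\omega^S_0\|_{L^1}$ are unchanged; but the energy is the homogeneous $\dot H^{-1}$ norm of the vorticity, which is \emph{not} scale invariant under an anisotropic squeeze. Explicitly, with $\lambda = Mt$, one has $\widehat{\omega^S(t)}(\xi) = \widehat{\omega^S_0}(\xi_1/\lambda, \lambda\xi_2)$, so
\begin{equation*}
\|u^S(t)\|_{L^2}^2 = \int_{\mathbb{R}^2} \frac{|\widehat{\omega^S(t)}(\xi)|^2}{|\xi|^2}\,d\xi = \int_{\mathbb{R}^2} \frac{|\widehat{\omega^S_0}(\eta)|^2}{\lambda^2\eta_1^2 + \lambda^{-2}\eta_2^2}\,d\eta,
\end{equation*}
and the integrand tends to $0$ pointwise (for a.e.\ $\eta$ with $\eta_1\neq 0$) as $\lambda\to\infty$, dominated by $|\widehat{\omega^S_0}(\eta)|^2/(\eta_1^2+\eta_2^2)$, which is integrable because $\omega^S_0$ has zero mean and enough smoothness/decay. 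Dominated convergence then yields $\|u^S(t)\|_{L^2}\to 0$ as $M\to\infty$ for each fixed $t>0$. The main obstacle I anticipate is the justification that $\|u^L(t)\|_{L^2}$ stays bounded below away from $0$ (or at least stays above $\|u^S(t)\|_{L^2}$) so that the square-root step is legitimate; this should follow since $\|u^L(t)\|_{L^2}=\|u^L_0\|_{L^2}$ is conserved if one regards $\omega^L$ as evolving by its own dynamics, but in the coupled system one must instead argue that for large $M$ the cross term is small and the total energy is essentially carried by the large scale — which is precisely the qualitative content the theorem is meant to convey.
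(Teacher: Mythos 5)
Your argument for the energy inequality applies Cauchy--Schwarz in the wrong direction. Energy conservation plus your (correct) sign choice gives
\begin{equation*}
\|u^S_0\|_{L^2}^2+\|u^L_0\|_{L^2}^2 \;\le\; \|u^S(t)\|_{L^2}^2+2\langle u^S(t),u^L(t)\rangle+\|u^L(t)\|_{L^2}^2,
\end{equation*}
but you then replace the cross term by its \emph{lower} bound $-2\|u^S(t)\|_{L^2}\|u^L(t)\|_{L^2}$; substituting a lower bound into the larger side of a ``$\le$'' yields nothing, and your intermediate inequality $\|u^S_0\|_{L^2}^2+\|u^L_0\|_{L^2}^2\le(\|u^L(t)\|_{L^2}-\|u^S(t)\|_{L^2})^2$ is false in general (test it at $t=0$ with $\langle u_0^S,u_0^L\rangle=0$: it reads $0\le-2\|u_0^S\|_{L^2}\|u_0^L\|_{L^2}$). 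What is needed is the \emph{upper} bound $2\langle u^S(t),u^L(t)\rangle\le 2\|u^S(t)\|_{L^2}\|u^L(t)\|_{L^2}$, i.e.\ the triangle inequality $\|u^S(t)+u^L(t)\|_{L^2}\le\|u^S(t)\|_{L^2}+\|u^L(t)\|_{L^2}$, which immediately gives $(\|u_0^L\|_{L^2}^2+\|u_0^S\|_{L^2}^2)^{1/2}\le\|u^L(t)\|_{L^2}+\|u^S(t)\|_{L^2}$. This is exactly the paper's route, and it makes the side condition $\|u^L(t)\|_{L^2}\ge\|u^S(t)\|_{L^2}$ --- the ``main obstacle'' you worry about at the end --- entirely unnecessary.

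The scaling computation also has a gap: the function $|\widehat{\omega^S_0}(\eta)|^2/|\eta|^2$ does \emph{not} dominate the family $|\widehat{\omega^S_0}(\eta)|^2/(\lambda^2\eta_1^2+\lambda^{-2}\eta_2^2)$ near the line $\{\eta_1=0\}$, where the integrand grows like $\lambda^2$ and does not converge to zero pointwise. Mean-zero alone is insufficient here: if $\int\omega^S_0(x_1,x_2)\,dx_1\not\equiv 0$, i.e.\ $\widehat{\omega^S_0}(0,\eta_2)\not\equiv 0$ (e.g.\ $\omega^S_0=a(x_1)b(x_2)$ with $\int a\neq 0$, $\int b=0$), the contribution from $|\eta_1|\lesssim\lambda^{-2}$ stays bounded away from zero as $\lambda\to\infty$, so the claimed limit fails. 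The paper avoids this by using the stronger structural fact that $\omega^S_0=\partial_1\Omega_0$ for some $\Omega_0\in C^\infty_c$ (which holds for the odd-in-$x_1$ vorticities under consideration, but is not implied by mean-zero): then $\widehat{\omega^S_0}(\eta)=i\eta_1\widehat{\Omega_0}(\eta)$ and the elementary bound $\eta_1^2/(\lambda^2\eta_1^2+\lambda^{-2}\eta_2^2)\le\lambda^{-2}$ gives a quantitative $O((Mt)^{-2})$ decay with no convergence theorem at all. You should either impose this fiberwise mean-zero condition explicitly or restrict the $\{\eta_1\approx 0\}$ region by hand; as written, your dominated-convergence step is not valid.
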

\begin{proof}
If the velocity interaction is negative, namely, 
\begin{equation}\label{interaction of velocity}
\int u_0^L\cdot u_0^S<0,
\end{equation}
then we replace $\omega_0^S$ to $-\omega_0^S$.
In this case, the above integration \eqref{interaction of velocity} becomes positive.
By the enstrophy conservation, and the disjoint supports, we see 
\begin{equation*}
\|\omega^L(t)\|_{L^2}=\|\omega^L_0\|_{L^2}\quad\text{and}\quad
\|\omega^S(t)\|_{L^2}=\|\omega^S_0\|_{L^2}.
\end{equation*}
By the mean-zero vortex $\omega_0^S$, there is $\Omega_0\in C^\infty_c$ such that 
$\partial_1\Omega_0=\omega_0^S$.
By taking the Fourier transform, we have 
\begin{eqnarray*}
\|u^S_0((Mt)^{-1}\cdot, Mt\cdot)\|_{L^2}^2
&\leq& 
\int \frac{|\hat \omega_0^S(\xi_1,\xi_2)|^2}{((Mt\xi_1)^2+((Mt)^{-1}\xi_2)^2)^{1/2}}d\xi\\
&\leq&
 \frac{1}{Mt}\int|\hat \Omega_0(\xi_1,\xi_2)|^2d\xi
\to 0\quad\text{as}\quad M\to\infty.
\end{eqnarray*}
Also by the energy conservation to the incompressible Euler flow, we see
\begin{eqnarray*}
\|u^L_0\|_{L^2}^2+\|u^S_0\|_{L^2}^2&\leq &
\|u^L_0\|_{L^2}^2+2\int u^L_0\cdot u^S_0+\|u^S_0\|_{L^2}^2\\
&=&\|u^L_0+u^S_0\|_{L^2}^2=\|u^L(t)+u^S(t)\|_{L^2}^2\\
&\leq &
(\|u^L(t)\|+\|u^S(t)\|_{L^2})^2
\end{eqnarray*}
By the above estimate, the large scale vorticity  gains  energy from the small-scale vorticity.

\end{proof}

\begin{remark}\label{adding remainder case}
By the same argument as in the previous section, even if we add $g$ in \eqref{euler2}
to the initial vorticity, we can still get the energy-transfer process provided by sufficiently small $\epsilon>0$ and 
large $R_\epsilon$.
We use the same notations in Section \ref{general setting}.
Let $u_f=\nabla^{\perp}\Delta^{-1}\omega_f$, $u_g=\nabla^{\perp}\Delta^{-1}\omega_g$
and $\omega_g(t,x)=g(\Phi^{-1}(t,x))$.
Note that 
\begin{equation*}
\omega=\omega_f+\omega_g=f(\Phi^{-1})+g(\Phi^{-1})
\end{equation*}
and $u_f$ satisfies
\begin{equation*}
\partial_tu_f=(u_f\cdot \nabla)u_f+(u_g\cdot \nabla)u_f=-\nabla p_f
\end{equation*}
with some scalar function $p_f$.
Set $\eta=u_f-\tilde u$.
Then we have 
\begin{equation}\label{eta}
\partial_t\eta+(\eta\cdot\nabla)u_f+(\tilde u\cdot \nabla)\eta+(u_g\cdot\nabla)u_f=-\nabla p_\eta
\end{equation}
with some scalar function $p_\eta$.
If the supports of $\omega_g$ and $\omega_f$ are far from each other, then 
$\|(u_g\cdot \nabla)u_f\|_{L^2}$ is small enough.
By the usual well-posedness theorem with a commutator estimate,
 then we can control $\|u_f\|_{H^s}$ for $s=3,4,5\cdots$
(we use the initial vorticity $f+g$, and the norm in $H^s$ is independent of the distance between $f$ and $g$).
Thus we have 
\begin{equation*}
\|\nabla \omega_f\|_\infty\leq \|\omega_f\|_{H^{2}}\leq \|\omega_f+\omega_g\|_{H^{2}}
\lesssim \|f+g\|_{H^{2}}
\end{equation*}
and also 
\begin{equation*}
\|\nabla u_f\|_\infty\leq \|\omega_f\|_{H^{3}}\leq \|\omega_f+\omega_g\|_{H^{3}}
\lesssim \|f+g\|_{H^{3}}
\end{equation*}
for $t\in[0,1]$.
We  multiply $\eta$ to \eqref{eta} on both sides, integrate on $\mathbb{R}^2$ and with some algebra,  we obtain the following energy inequality:
\begin{equation*}
\partial_t\|\eta\|_2^2\lesssim \|\nabla u_f\|_\infty\|\eta\|_2^2+\|\eta\|_2\|(u_g\cdot \nabla)u_f\|_{L^2}.
\end{equation*}
Thus the energy $\|\eta\|_2$ is small if $\|(u_g\cdot \nabla)u_f\|_{L^2}$ is sufficiently small.
\end{remark}


\bibliographystyle{amsplain}

\begin{thebibliography}{10} 

\bibitem{AP}
D. Ayala and B. Protas,
\textit{Maximum palinstrophy growth in 2D incompressible flows},
preprint, arXiv:1305.7259v2.


\bibitem{BF} 
C. Bardos and U. Frisch, 
\textit{Finite-time regularity for bounded and unbounded ideal incompressible fluids using H\"older estimates}, 
Turbulence and Navier-Stokes equations (Proc. Conf., Univ. Paris-Sud, Orsay, 1975), 
Lecture Notes in Math., vol. \textbf{565}, Springer, Berlin 1976





\bibitem{BL} 
J. Bourgain and D. Li, 
\textit{Strong ill-posedness of the incompressible Euler equations in borderline Sobolev spaces}, 
Invent. math. \textbf{201}, (2015), 97-157; 
preprint arXiv:1307.7090 [math.AP]. 



\bibitem{Ch} 
J. Chemin, 
\textit{Perfect Incompressible Fluids}, 
Clarendon Press, Oxford 1998. 


\bibitem{Co1} 
P. Constantin, 
\textit{An Eulerian-Lagrangian approach for incompressible fluids: local theory}, 
J. Amer. Math. Soc. \textbf{14} (2001), 263-278. 



\bibitem{EbMa}
D. Ebin and J. Marsden, 
\textit{Groups of diffeomorphisms and the motion of an incompressible fluid}, 
Ann. Math. \textbf{92} (1970), 102-163.  

\bibitem{Eb} 
D. Ebin, 
\textit{A concise presentation of the Euler equations of hydrodynamics}, 
Comm. Partial Differential Equations \textbf{9} (1984), 539-559. 

\bibitem{EJ}
T. Elgindi, and I.-J. Jeong, 
\textit{Ill-posedness for the incompressible Euler equations in critical Sobolev spaces},
 arXiv:1603.07820.

 



\bibitem{Gu} 
N. Gyunter, 
\textit{On the motion of a fluid contained in a given moving vessel}, 
(Russian), Izvestia Akad. Nauk USSR, Ser. Phys. Math. \textbf{20} (1926), 1323-1348, 1503-1532; 
\textbf{21} (1927), 621-556, 735-756, 1139-1162; \textbf{22} (1928), 9-30. 


\bibitem{Ka} 
T. Kato, 
\textit{On classical solutions of the two-dimensional non-stationary Euler equation}, 
Arch. Ration. Mech. Anal. \textbf{25} (1967), 188-200. 

\bibitem{Ka2} 
T. Kato, 
\textit{Remarks on the Euler and Navier-Stokes equations in $\mathbb{R}^2$}, 
Proc. Sym. Pure Math., \textbf{45} (1986), 1-7. 


\bibitem{KL} 
T. Kato and C. Lai, 
\textit{Nonlinear evolution equations and the Euler flow}, 
J. Funct. Anal. \textbf{56} (1984), 15-28. 

\bibitem{KP-Duke}
T. Kato and G. Ponce,
\textit{On nonstationary flows of viscous and ideal fluids in $L^p_s(\mathbb{R}^2)$},
Duke Math. J. \textbf{55} (1987), 487-499.


\bibitem{KS}
A. Kiselev and V. \v Sver\'ak,
\textit{Small scale creation for solutions of the incompressible two-dimensional Euler equation},
Ann. of Math., \textbf{180} (2014) 1205-1220.



\bibitem{Li} 
L. Lichtenstein, 
\textit{Uber einige Existenzprobleme der Hydrodynamik}, 
Math. Zeit. \textbf{23} (1925), 89-154, 309-316; \textbf{26} (1927), 196-323; \textbf{28} (1928), 387-415; 
\textbf{32} (1930), 608-640. 

\bibitem{MB} 
A. Majda and A. Bertozzi, 
\textit{Vorticity and Incompressible Flow}, 
Cambridge University Press, Cambridge 2002. 


\bibitem{MSMOM}
W. H. Matthaeus, W. T. Stribling, D. Martinez, S. Oughton and David Montgomery,
\textit{Selective decay and coherent vortices in two-dimensional incompressible turbulence},
Phys. Rev. Lett. \textbf{66}  (1991),  2731-2734.


\bibitem{MY2} 
G. Misio{\l}ek and T. Yoneda, 
\textit{Continuity of the solution map of the Euler equations in H\"older spaces and weak norm inflation in Besov spaces}, 
preprint arXiv: 1601.01024 [math.AP] 



\bibitem{Sw} 
H. Swann, 
\textit{The existence and uniqueness of nonstationary ideal incompressible flow in bounded domains in $R_3$}, 
Trans. Amer. Math. Soc. \textbf{179} (1973), 167-180. 




\bibitem{Wo} 
W. Wolibner, 
\textit{Un theor\'eme sur l'existence du mouvement plan d'un fluide parfait, homog\'ene, 
incompressible, pendant un temps infiniment long}, 
Math. Z. \textbf{37} (1933), 698-726. 







\bibitem{XWCE}
Z. Xiao, M. Wan, S. Chen and G. L. Eyink,
\textit{Physical mechanism of the inverse energy cascade of two-dimensional turbulence: a numerical investigation},
J. Fluid Mech. \textbf{619} (2009), 1-44.


\bibitem{Z}
A. Zlato\v s,
\textit{Exponential growth of the vorticity gradient for the Euler equation on the 
torus},
Adv. Math., \textbf{268} (2015), 396-403


\end{thebibliography}

\end{document}